
\documentclass[letterpaper, 10 pt, conference]{ieeeconf}  
\IEEEoverridecommandlockouts                           
\overrideIEEEmargins

\usepackage{amsmath} 
\usepackage{amssymb} 
\newcommand {\matr}[2]{\left[\begin{array}{#1}#2\end{array}\right]}
\newcommand {\stack}[2]{\begin{array}{#1}#2\end{array}}

\usepackage{algorithm}
\usepackage[noend]{algpseudocode}
\setlength{\textfloatsep}{10pt}
\usepackage{graphicx,psfrag}

\usepackage{tabularx,booktabs,multirow}
\usepackage{color}
\definecolor{wheat}{rgb}{0.96,0.87,0.70}
\definecolor{mario}{rgb}{0.8,0.8,1}
\definecolor{seb}{rgb}{0.8,1,0.8}
\definecolor{robert}{rgb}{1,0.8,0.8}
\definecolor{paolo}{rgb}{0.8,0.8,0.8}

\usepackage{tikz}

\usepackage{accents}
\usepackage{expl3,xparse}

\newcommand{\pout}[1]{p^\mathrm{out}_{#1}}
\newcommand{\pin}[1]{p^\mathrm{in}_{#1}}

\newcommand{\T}{}
\newcommand{\Tin}{}
\newcommand{\Tout}{}
\newcommand{\Toutm}{}
\newcommand{\UT}{}
\newcommand{\UTin}{}
\newcommand{\UTout}{}
\RenewDocumentCommand\T{mg}{\ensuremath{t_{#1 \IfNoValueTF{#2}{}{,#2} }}}
\RenewDocumentCommand\Tin{mg}{\ensuremath{{\T{#1}{#2}^\mathrm{in}}}}
\RenewDocumentCommand\Tout{mg}{\ensuremath{ {t^\mathrm{out}_{#1 \IfNoValueTF{#2}{}{,#2} }} }}
\RenewDocumentCommand\Toutm{mg}{\ensuremath{ {t^\mathrm{out-}_{#1 \IfNoValueTF{#2}{}{,#2} }} }}
\RenewDocumentCommand\UT{mg}{\ensuremath{u^\mathrm{t}_{#1 \IfNoValueTF{#2}{}{,#2} }}}
\RenewDocumentCommand\UTin{mg}{\ensuremath{u^\mathrm{t,in}_{#1 \IfNoValueTF{#2}{}{,#2} }}}
\RenewDocumentCommand\UTout{mg}{\ensuremath{u^\mathrm{t,out}_{#1 \IfNoValueTF{#2}{}{,#2} }}}

\newcommand{\ToutUB}{}
\RenewDocumentCommand\ToutUB{mg}{\ensuremath{ {t^\mathrm{out,ub}_{#1 \IfNoValueTF{#2}{}{,#2} }} }}
\newcommand{\ToutLB}{}
\RenewDocumentCommand\ToutLB{mg}{\ensuremath{ {t^\mathrm{o,lb}_{#1 \IfNoValueTF{#2}{}{,#2} }} }}

\renewcommand{\P}[0]{\mathcal{P}}

\newcommand{\V}{V}
\newcommand{\feasT}{\mathcal{T}}

\ExplSyntaxOn
\DeclareExpandableDocumentCommand{\IfNoValueOrEmptyTF}{mmm}
{
	\IfNoValueTF{#1}{#2}
	{
		\tl_if_empty:nTF {#1} {#2} {#3}
	}
}
\ExplSyntaxOff
\NewDocumentCommand\initstate{mg}{\ensuremath{ \hat{x}_{ \IfNoValueOrEmptyTF{#1}{}{#1,}0 } }}

\NewDocumentCommand\vars{mg}{\ensuremath{z_{#1 \IfNoValueTF{#2}{}{,#2} }}}
\NewDocumentCommand\state{mg}{\ensuremath{x_{#1 \IfNoValueTF{#2}{}{,#2} }}}
\NewDocumentCommand\barstate{mg}{\ensuremath{\bar{x}_{#1 \IfNoValueTF{#2}{}{,#2} }}}
\NewDocumentCommand\control{mg}{\ensuremath{u_{#1 \IfNoValueTF{#2}{}{,#2} }}}


\newtheorem{Proposition}{Propositon}


\title{\LARGE \bf
A Feasibility-Enforcing Primal-Decomposition SQP Algorithm for Optimal Vehicle Coordination
}

\author{Mario Zanon, Robert Hult, S\'ebastien Gros and Paolo Falcone
\thanks{This work was supported by Copplar (project number 32226302), the Swedish Research Council (VR, grant number 2012-4038) and the European Commission Seventh Framework under the AdaptIVe (grant number 610428). The authors are with the Department of Signals and Systems, Chalmers University of Technology, G\"{o}teborg, Sweden. e-mail: \{name.surname@chalmers.se\}}%
}

\begin{document}

\maketitle
\thispagestyle{empty}
\pagestyle{empty}

\begin{abstract}
	In this paper we consider the problem of coordinating autonomous vehicles approaching an intersection. We cast the problem in the distributed optimisation framework and propose an algorithm to solve it in real time. We extend previous work on the topic by testing two alternative algorithmic solutions in simulations. Moreover, we test our algorithm in experiments using real cars on a test track. The experimental results demonstrate the applicability and real-time feasibility of the algorithm.
\end{abstract}

\section{Introduction}


The problem of coordinating autonomous vehicles approaching an intersection has recently attracted increasing research interest
\cite{Azimi2011,Campos2013,Lee2012}. The main motivation behind this research topic is to use automation in order to (a) reduce the amount of accidents, (b) reduce pollution and energy consumption and (c) increase the capacity of the infrastructure.

In order to achieve these three goals, it is necessary to introduce communication between the involved agents and the infrastructure, and to design a suitable algorithmic framework in order to compute a coordination policy. The coordination problem can be framed as a distributed mixed-integer optimal control problem. While this class of problems has a high complexity and is known to be NP-hard \cite{Colombo2012}, by designing tailored optimisation algorithms and heuristics one can aim at computing approximated solutions in a rather short time. In this paper, however, we assume a prescribed crossing order and focus on the continuous part of the problem, i.e. the solution of the distributed optimal control problem. Future research will aim at extending the presented results in order to also optimise the crossing order.

The problem we address is not restricted to vehicles approaching an intersection, but can be framed more generally as a resource allocation problem subject to dynamic constraints. Such problems find applications in e.g. advanced manufacturing, traffic management, and logistics.

This paper is structured as follows. In Section~\ref{sec:formulation} we introduce the formulation of the coordination problem as a distributed optimisation one. In Section~\ref{sec:sqp} we present the distributed algorithm used to solve the problem. In Section~\ref{sec:results} we discuss the simulation and experimental results. We present the concluding remarks and outline for future research in Section~\ref{sec:conclusions}.

\begin{figure}[t]
	\begin{center}
		\includegraphics[width=0.32\textwidth]{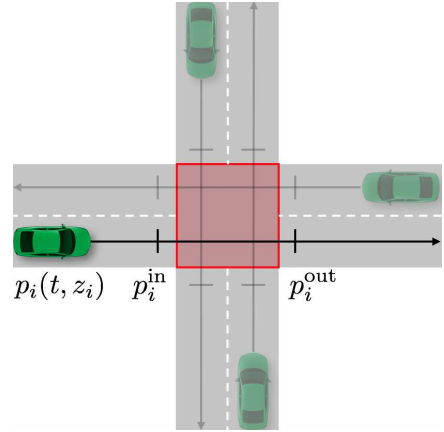}
		\caption{Illustration of the considered intersection scenario.}
		\label{fig:scenario}
	\end{center}
\end{figure}

\section{Problem Formulation}
\label{sec:formulation}
We consider the problem of $N_\mathrm{a}$ vehicles approaching an intersection, as illustrated in Figure~\ref{fig:scenario}.
For each vehicle $i$, we define the in- and out-times, i.e. the times at which the vehicle enters and exits the intersection, respectively as $\Tin{i}$ and $\Tout{i}$. We lump them together in vectors $\T{i} := (\Tin{i},\Tout{i})$ and we define the vector of all in- and out-times as $\T{} := (\T{1},\ldots,\T{N_\mathrm{a}})$.

The cost $\V_i(\T{i})$ associated with each individual vehicle is defined by an MPC problem specific to the vehicle. We define the states and controls of vehicle $i$ at time $k$ as $\state{i}{k}$ and $\control{i}{k}$ respectively. Moreover, we define vector $\vars{i} := (\vars{i}{0},\ldots,\vars{i}{N-1},x_{i,N})$ with $\vars{i}{k}:=(\state{i}{k},\control{i}{k})$ and the prediction horizon $N\in \mathbb{N}$. For simplicity, we assume linear dynamics and affine path constraints. Finally, we denote the set of all integers in a given interval as $\mathbb{I}_{[a,b]}=\{a,a+1,\ldots,b\}$.
The MPC problem of vehicle $i$ is then given by
\begin{subequations}\label{eq:localProblem}
	\begin{align}
	&&\hspace{-2em}\V_i(\T{i}):= \nonumber \\
	&&\hspace{-2em} \min_{\vars{i}} 
	& \quad J_i(\vars{i}) \\ 
	&&\hspace{-2em}\mathrm{s.t.} 			& \quad \state{i}{0} = \initstate{i} \label{eq:localIC}\\
	&&& \quad \state{i}{k+1}= A_i\state{i}{k} + B_i\control{i}{k}, & k\in\mathbb{I}_{[0,N-1]}, \label{eq:localDyn}\\
	&&& \quad D_{i,k}\state{i}{k} + E_{i,k}\control{i}{k} \leq e_{i,k}, & k\in\mathbb{I}_{[0,N-1]}, \label{eq:localIneq}\\
	&&& \quad p_i(\Tin{i},\vars{i})-\pin{i} = 0, \label{eq:localPosIn}\\
	&&& \quad p_i(\Tout{i},\vars{i} )-\pout{i} = 0, \label{eq:localPosOut}
	\end{align}
\end{subequations}
where $J_i(\vars{i})$ is a vehicle-specific quadratic cost and constraints~\eqref{eq:localPosIn} and~\eqref{eq:localPosOut} make use of function $p_i$, describing the position of vehicle $i$ on its path along road, in order to force the vehicle to enter and exit the intersection at the prescribed times, see~\cite{Hult2016}. Because times $\T{i}$ are fixed, Problem~\eqref{eq:localProblem} is a Quadratic Program (QP).
The set of feasible in- and out-times is then the domain of MPC Problem~\eqref{eq:localProblem}, i.e.
\begin{align}
\label{eq:localFeasibility}
\feasT_i := \text{dom}(\V_i(\T{i})).
\end{align}

 The coordination problem can then be formulated as 
\begin{subequations}
	\label{eq:centralProblem}
	\begin{align}
	\min_{\T{}} & \quad \sum_{i=1}^{N_\mathrm{a}} \V_i (\T{i})	\label{eq:centralCost}												\\
	\mathrm{s.t.} 			& \quad  \T{i} \in \feasT_i,	& i\in \mathbb{I}_{[1,N_\mathrm{a}]}, \label{eq:agentFeasibility}					\\
	& \quad \Tout{i+1} \leq \Tin{i}, 			& i\in \mathbb{I}_{[1,N_\mathrm{a}]}, \label{eq:precedenceConstraints}
	\end{align}
\end{subequations}
such that the sum of each vehicle's individual cost is minimised, subject to having at most one vehicle in the intersection at any time~\eqref{eq:precedenceConstraints}.
Note that, because constraints~\eqref{eq:localPosIn}-\eqref{eq:localPosOut} are generally nonlinear in $\T{i}$, Problem~\eqref{eq:centralProblem} is nonconvex.


We assume that the technical assumption \cite[Assumption 1]{Hult2016} holds and $p_i(t,\vars{i})$ is monotonically increasing in $t$ and differentiable. 
Then, $\feasT_i$ can be described by the in- out-times relative to the trajectories obtained by solving the linear programs (LPs):
\begin{subequations}
	\label{eq:LP_time}
	\begin{align}
	&\hspace{-1em}(\max)\min_{\vars{i}} \  x_N \ \mathrm{s.t.} \ \eqref{eq:localIC}-\eqref{eq:localIneq}, && \text{for bounds on} && \Tin{i}, \\
	&\hspace{-1em}(\max)\min_{\vars{i}} \  x_N \ \mathrm{s.t.} \ \eqref{eq:localIC}-\eqref{eq:localPosIn},
	\end{align}
\end{subequations}
for $\Tin{i}$-dependent constraints on $\Tout{i}$, which we denote as $\Tout{i}\leq\ToutUB{i}\left (\Tin{i} \right )$, $\Tout{i}\geq\ToutLB{i}\left (\Tin{i} \right )$.
For all details concerning this formulation we refer to~\cite{Hult2016}. 





%

\section{Numerical Solution of the NLP}
\label{sec:sqp}

In this section, we present an adaptation of sequential quadratic programming (SQP) in a distributed setting for solving Problem~\eqref{eq:centralProblem}.
We remark that,
by definition of $\V_i(\T{i})$ and $\feasT_i$, 
Problem~\eqref{eq:centralProblem} is non-smooth. 
A thorough discussion on the continuity and differentiability properties of $\V_i$ and $\feasT_i$ has been presented in~\cite{Hult2016}. 
Note however that, provided that an interior-point QP solver is used in the local problems \eqref{eq:localProblem} and \eqref{eq:LP_time}, one obtains a smooth approximation of the non-smooth problem. While this solves theoretical issues regarding the convergence of algorithms for smooth optimisation, slow convergence can in principle not be excluded, as the smoothly approximated problem becomes highly nonlinear at the points of non-smoothness of the original problem.



\subsection{Sequential Quadratic Programming}

%

For notational simplicity, we rewrite NLP~\eqref{eq:centralProblem} as
\begin{align}
\label{eq:nlp}
\min_{\T{}} \ \ & f(\T{}) &&
\mathrm{s.t.} \ \ h(\T{})\geq0,
\end{align}
where $f(\T{})=\sum_{i=1}^{N_\mathrm{a}} \V_i (\T{i})$ and we lump Constraints~\eqref{eq:agentFeasibility}-\eqref{eq:precedenceConstraints} in function $h$. We define the associated Lagrangian as $\mathcal{L}(\T{},\mu):= f(\T{}) - \mu^\top h(\T{})$.
Starting from an initial guess $v^{(0)}=(\T{}^{(0)}, \ \mu^{(0)})$, SQP iteratively computes $v^{(j)}$ using
\begin{align}\label{eq:pdstep}
v^{(j+1)} = v^{(j)} + \alpha^{(j)} \Delta v^{(j)},
\end{align}
with $\alpha^{(j)}\in(0,1]$ and 
\begin{align}
\label{eq:sqp_full_step}
\Delta v^{(j)} = (\Delta \T{}^{(j)}, \ \tilde \mu^{(j)} - \mu^{(j)}),
\end{align}
obtained as the primal-dual solution $(\Delta \T{}^{(j)}, \  \tilde \mu^{(j)})$ of the quadratic programming (QP) subproblem
\begin{subequations}
	\label{eq:qp_k_definition}
	\begin{align}
	\underset{\Delta \T{}}{\mathrm{min}} \ \ & \frac12 \Delta \T{}^\top H^{(j)} \Delta \T{} + \nabla   f(\T{}^{(j)})^\top \Delta \T{} \label{eq:qp_k_cost}\\
	\mathrm{s. t.} \ \ 
	&   h(\T{}^{(j)}) + \nabla   h(\T{}^{(j)})^\top \Delta \T{} \geq 0. \label{eq:qp_k_ineq_cstr} 
	\end{align}
\end{subequations}
The iterations are stopped when the KKT residual $r$ satisfies
\begin{align}
	r^{(j)}:= \left \| \stack{c}{  \nabla f(\T{}^{(j)}) - \nabla  h(\T{}^{(j)})\mu^{(j)} \\  \min(0, h(\T{}^{(j)})) } \right \|_\infty \leq \epsilon.
\end{align}

Existing variants of SQP differ in the computations of the step size $\alpha^{(j)}$ and the so-called Hessian matrix approximation $H^{(j)}$. If exact Hessian is used, i.e. $H^{(j)}=\nabla^2_{\T{}\T{}}{\mathcal{L}(\T{}^{(j)}, \mu ^{(j)} )}$, the KKT conditions of each QP subproblem~\eqref{eq:qp_k_definition} coincide with a special form of linearisation of the KKT conditions of NLP~\eqref{eq:nlp} evaluated at $\T{}^{(j)}$. For more details on SQP, we refer to e.g.~\cite{Nocedal2006}.

The QP matrices $\nabla   f(\T{}^{(j)})$, 
$\nabla   h(v^{(j)})$ and the Hessian of the Lagrangian $\nabla^2_{\T{}\T{}}{\mathcal{L}(\T{}^{(j)}, \mu ^{(j)} )}$ are called first and second-order sensitivities respectively, whose computation is presented in detail in~\cite{Hult2016}. Here we simply recall that the evaluations of $f,h$ require the solution of 2 LPs and 1 QP, while the fisrt and second order sensitivities 
are obtained at a marginal additional cost with respect to the function evaluations.


\subsection{Hessian Regularisation}
For nonconvex optimisation algorithms, it is in general required that the reduced Hessian is positive definite, in order to avoid solving indefinite QP subproblems. Therefore, a modification of the Lagrangian Hessian might be required, especially if the linearisation point is far from the optimum. 
For a thorough analysis we refer the interested reader to e.g.~\cite{Nocedal2006} and references therein. 

In this paper, we apply the simple strategy of removing all directions of negative curvature present in the Hessian by
adding the minimal regularisation needed to obtain a positive curvature in all directions. We do this by exploiting the block-diagonal structure of the Hessian to perform an eigenvalue decomposition of each $2$-by-$2$ block and saturate all eigenvalues to a predefined minimum positive value. Given the small size, the eigenvalue decomposition can be made very efficient. 


\subsection{Globalisation and Merit Function}

In order to guarantee global convergence of SQP algorithms, several approaches have been proposed in the literature, including \emph{linesearch} and \emph{trust region} methods, both of which rely on a so-called \emph{merit function} to measure progress towards optimality and feasibility.
Several merit functions have been proposed and we refer the interested reader to~\cite{Nocedal2006} and references therein for more details on the topic. In this paper we make use of linesearch based on the so-called $\ell_1$ merit function, defined as
\begin{align}
M(\T{}) = f(\T{}) + \sigma \| h^-(\T{}) \|_1 ,
\end{align}
where $h^-(\T{}) := \mathrm{min}(h(\T{}),0)$ and $\sigma$ is a parameter which must be chosen such that $\sigma>\| \mu \|_\infty$.

Note that one can compute the first-order term of the Taylor expansion of $M$ in the SQP direction $y$ as~\cite{Nocedal2006}:
\begin{align}
\mathrm{D}M(x)[y] y = \nabla f(x) y - \sigma \| h^-(x) \|_1,
\end{align}
where we define the directional derivative of $M$ in direction $y$ evaluated at $x$ as $\mathrm{D}M(x)[y]$.

Once the QP solution $\Delta v^{(j)}$ is obtained, linesearch globalisation techniques choose the step size so as to enforce a decrease in the merit function based on the Armijo condition
\begin{align}
\label{eq:armijo}
M( \T{} + \alpha\Delta \T{} ) \leq M( \T{} ) + \gamma \mathrm{D} M( \T{} )[\Delta \T{}] \alpha\Delta \T{},
\end{align}
where $\gamma\in(0,0.5]$ is a fixed constant.

Because solving~\eqref{eq:armijo} exactly is in general computationally expensive, a popular technique called \emph{backtracking} consists in starting with $\alpha = 1$ and iteratively reducing it using $\alpha \leftarrow \beta \alpha$ with $\beta\in(0,1)$ until condition~\eqref{eq:armijo} is satisfied.

%
%
%

\subsection{Local Feasibility Issues}

NLP solvers only guarantee feasibility of the constraints at convergence and not throughout the iterations. Because in the case of Problem~\eqref{eq:centralProblem}, the cost terms $\V_i(\T{i})$ are not defined for infeasible times $\T{i}\notin \feasT_i$, a remedy needs to be devised. 


We compare two methods based on: (a) introducing soft constraints using slack variables and penalising the $\ell_1$-norm of the constraint violation with a sufficiently high weight and (b) using a feasibility-restoring projection of each iterate, as proposed in~\cite{Zanon2017}. In the following we will call approach (a) \emph{relaxation} and approach (b) \emph{projection}. In~\cite{Zanon2017} the use of projection is preferred to the use of relaxation. However, this choice is not motivated formally and no comparison between the two approaches is made. 

While appealing for its simplicity, approach (a) suffers from one main drawback: the local quadratic approximation evaluated at an infeasible point is dominated by the cost associated with the constraint violation and looses its validity once the problem becomes feasible. This approach can be implemented by replacing constraints~\eqref{eq:localPosOut} in each agent's problem by
\begin{subequations}
	\label{eq:localPosOutSlack}
	\begin{align}	
		p_i(\Tout{i},\vars{i} )-\pout{i} + s_1 - s_2 = 0, \label{eq:localPosOutMod} \\
		s_1 \geq0, \quad s_2 \geq0.
	\end{align}
\end{subequations}

\begin{Proposition}
	Reformulate constraints~\eqref{eq:localPosOut} as~\eqref{eq:localPosOutSlack} and to add the term $\rho s_1+\rho s_2$ to the cost of each local Problem~\eqref{eq:localProblem}. Choose $\rho>\lambda_\mathrm{max}$, i.e. the largest Lagrange multiplier associated with any constraint of the local Problem~\eqref{eq:localProblem} for all feasible in-out times $\T{i}$. Then, the SQP algorithm with linesearch will converge.
\end{Proposition}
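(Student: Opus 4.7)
The plan is to invoke the classical exact $\ell_1$ penalty result at the level of each local QP, and then verify that the outer problem obtained by replacing $\V_i$ with its slack-relaxed counterpart satisfies the standard assumptions for the convergence of SQP with $\ell_1$-merit linesearch.

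First I would observe that the modified local problem is feasible for every $\T{i}$, not only for $\T{i}\in\feasT_i$: the nonnegative slacks $s_1,s_2$ in \eqref{eq:localPosOutSlack} can absorb any residual in \eqref{eq:localPosOutMod}. Denoting by $\tilde \V_i(\T{i})$ the resulting value function, $\mathrm{dom}(\tilde \V_i)$ becomes the entire in--out time space and the feasibility obstruction raised at the beginning of Section~III-D disappears for the outer SQP, so the merit function $M$ is well-defined at every iterate.

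Next I would appeal to the exact penalty theorem (see \cite{Nocedal2006}, Ch.~17): for a convex QP whose multiplier on the equality \eqref{eq:localPosOut} is bounded in absolute value by $\lambda_\mathrm{max}$, replacing it by the $\ell_1$-penalised slack formulation with weight $\rho>\lambda_\mathrm{max}$ yields an equivalent problem, in the sense that at the optimum $s_1^\star=s_2^\star=0$ and the primal-dual solution coincides with that of \eqref{eq:localProblem}. Uniformity of $\lambda_\mathrm{max}$ over $\feasT_i$ guarantees that this equivalence holds at every feasible in--out time, so $\tilde \V_i(\T{i})=\V_i(\T{i})$ on $\feasT_i$ and the reformulated outer NLP shares its KKT points with~\eqref{eq:centralProblem}.

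The final step is standard. Since $\tilde \V_i$ is finite-valued and continuous on the whole time space, the backtracking linesearch \eqref{eq:armijo} terminates in finitely many steps, and the classical convergence result for SQP with $\ell_1$-merit linesearch (e.g.~\cite{Nocedal2006}, Thm.~18.3) applies: every limit point of $\{\T{}^{(j)}\}$ is a KKT point of the reformulated NLP and therefore of~\eqref{eq:centralProblem}. The main obstacle I expect is controlling the smoothness of $\tilde \V_i$ across $\partial\feasT_i$, where the active set of the local problem changes and $\tilde \V_i$ is only piecewise quadratic; as already noted in Section~\ref{sec:sqp}, using an interior-point solver for the local QP yields the smooth approximation required to invoke the standard convergence theorems.
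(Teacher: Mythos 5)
Your proposal follows essentially the same route as the paper's proof: invoke the exact $\ell_1$ penalty theorem (the paper cites Fletcher, Thm.~14.3.1, where you cite the equivalent result in Nocedal \& Wright) to get equivalence of the relaxed and original local problems for $\rho>\lambda_\mathrm{max}$, note that the slacks make every local problem feasible so the merit function is defined at all iterates, and conclude via standard linesearch-SQP convergence. Your added remark on the non-smoothness of $\tilde \V_i$ across $\partial\feasT_i$ and its interior-point smoothing is a caveat the paper states in Section~\ref{sec:sqp} rather than in the proof, but it does not change the argument.
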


\begin{proof}
	The condition $\rho>\lambda_\mathrm{max}$ is necessary in order to make sure that the relaxed problem yields the solution to the original problem whenever the latter has a feasible solution~\cite[Theorem 14.3.1]{Fletcher1987}. The relaxation of the constraints ensures that the local problems are feasible for any in- out-time pair and linesearch ensures a decrease in the merit function. Therefore, for all feasible $t$, the relaxed formulation coincides with the original one, while for all infeasible $t$, linesearch ensures that progress is made towards both feasibility and optimality.
\end{proof}

In order to efficiently implement the projection approach (b), we observe that (i) the constraints~\eqref{eq:agentFeasibility} are simple bounds on the in times $\Tin{i}$ which are always satisfied, and (ii) the constraints on the out times are given by $\Tout{i}\geq\ToutLB{i} \left (\Tin{i}\right )$ and $\Tout{i}\leq\ToutUB{i} \left (\Tin{i}\right )$.
Therefore, the in times $\Tin{i}$ are always feasible and all out times $\Tout{i}$ which become infeasible at iterate $k$ can be projected back onto the set of feasible times by using
\begin{align*}
\mathcal{P}_i\left (\Tout{i}^{(j)} \right ) &:= \left [\Tout{i}^{(j)}\right ]^{\ToutUB{i} \left (\Tin{i}^{(j)}\right )}_{\ToutLB{i} \left (\Tin{i}^{(j)}\right )},
\end{align*}
with $[a]_b^c := \mathrm{max}( \mathrm{min}(a,c),b)$.
Moreover, such a projection can be done easily without extra computations, as the evaluation of ${\ToutUB{i} \left (\Tin{i}^{(j)}\right )}$, ${\ToutLB{i} \left (\Tin{i}^{(j)}\right )}$ is needed for each iterate $j+1$ of the SQP algorithm. An illustration of the projection procedure is given in~\cite{Zanon2017}.
The main drawback of this approach is the partial loss of parallelisability at the agent level, i.e. the two LPs for the computation of the linearisation of constraint~\eqref{eq:agentFeasibility} must be solved before QP~\eqref{eq:localProblem}.


While the projection is simple to perform, the linesearch procedure has to be modified in order to guarantee that the step is a descent direction, provided that $\alpha$ is chosen small enough.
We therefore define a modification of the Armijo condition~\eqref{eq:armijo} using the projection as follows
\begin{align}
\label{eq:proj_armijo}
M( \P (\T{} + \alpha \Delta \T{} )  ) \leq M( \T{} ) + \gamma \mathrm{D} M( \T{} )[\Delta \T{}] \alpha\Delta \T{},
\end{align}
with $\Delta \T{}$ computed by~\eqref{eq:sqp_full_step}. 
As opposed to the standard linesearch approaches, in~\eqref{eq:proj_armijo} we do not backtrack from the projected Newton step. Instead, we backtrack from the non-projected Newton step to obtain the step candidate and project each step candidate.

\begin{Proposition}
	The Armijo condition with projection~\eqref{eq:proj_armijo} is guaranteed to hold, provided that $\alpha$ is chosen small enough.
\end{Proposition}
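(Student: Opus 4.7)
The plan is to compare the projected iterate $\mathcal{P}(\T{} + \alpha \Delta \T{})$ with the unprojected step $\T{} + \alpha \Delta \T{}$, show that the two differ by only $O(\alpha^2)$, and then rely on the standard Armijo analysis for the $\ell_1$ merit function. Concretely, I would first establish that the displacement caused by the projection is quadratically small in $\alpha$, then use local Lipschitz continuity of $M$ on the feasible set to transfer this bound onto the merit function itself, and finally combine it with the usual descent estimate $M(\T{} + \alpha \Delta \T{}) \leq M(\T{}) + \alpha\,\mathrm{D} M(\T{})[\Delta \T{}] + O(\alpha^2)$.

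For the displacement bound I rely on two facts. First, the previous iterate $\T{}$ lies in $\prod_{i} \feasT_i$, which is ensured by the projection performed at the previous outer iteration. Second, the SQP direction $\Delta \T{}$ satisfies the linearised constraints by construction of QP~\eqref{eq:qp_k_definition}, including the linearisations of the agent-feasibility bounds $\Tout{i} \geq \ToutLB{i}(\Tin{i})$ and $\Tout{i} \leq \ToutUB{i}(\Tin{i})$. A second-order Taylor expansion of each such scalar constraint along $\T{} + \alpha \Delta \T{}$ then bounds the possible violation by $C\alpha^2$ for some local constant $C$, since the zeroth- and first-order terms of the expansion coincide with $(1-\alpha)$ times the non-negative constraint value at $\T{}$ plus $\alpha$ times the non-negative linearisation. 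Consequently $\mathcal{P}(\T{} + \alpha \Delta \T{})$ moves each $\Tout{i}$ coordinate by at most $C\alpha^2$, and local Lipschitz continuity of $f$ on $\prod_i \feasT_i$ together with that of $\|h^-(\cdot)\|_1$ yields $|M(\mathcal{P}(\T{} + \alpha \Delta \T{})) - M(\T{} + \alpha \Delta \T{})| \leq C' \alpha^2$.

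Next I invoke the standard result that, under the bound $\sigma > \|\mu\|_\infty$ and the Hessian regularisation described earlier (so that the QP solution $\Delta \T{}$ is well-defined and unique), the SQP direction is a descent direction for the $\ell_1$ merit, i.e.\ $\mathrm{D} M(\T{})[\Delta \T{}] < 0$. Combining with the bound above gives $M(\mathcal{P}(\T{} + \alpha \Delta \T{})) \leq M(\T{}) + \alpha\,\mathrm{D} M(\T{})[\Delta \T{}] + O(\alpha^2)$, and for any $\gamma \in (0, 0.5]$ this right-hand side is below $M(\T{}) + \gamma \alpha\,\mathrm{D} M(\T{})[\Delta \T{}]$ as soon as $\alpha$ is small enough for the $O(\alpha^2)$ remainder to be dominated by $(1-\gamma)\alpha\,|\mathrm{D} M(\T{})[\Delta \T{}]|$, which is exactly the Armijo condition~\eqref{eq:proj_armijo}.

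The main obstacle I anticipate is a clean treatment of the non-smoothness: both the $\ell_1$ penalty and the cost terms $\V_i$ are only piecewise smooth, as recalled in Section~\ref{sec:sqp}. I would therefore work throughout with directional derivatives and local Lipschitz bounds rather than classical gradients, and would explicitly impose $\alpha \leq \alpha_0$ with $\alpha_0 > 0$ chosen so that $\T{} + \alpha \Delta \T{}$ remains in a neighbourhood of $\T{}$ on which all Taylor and Lipschitz estimates used above hold uniformly; this restriction is absorbed into the final assertion that $\alpha$ is \emph{small enough}.
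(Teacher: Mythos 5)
The paper does not actually prove this proposition: its entire ``proof'' is a deferral to~\cite{Zanon2017}, so there is no in-paper argument to compare yours against. Taken on its own terms, your proof follows the standard feasible-direction/projected-linesearch template, and the overall structure --- descent of the $\ell_1$ merit along the SQP direction plus an $O(\alpha^2)$ perturbation caused by the projection --- is the right one. The central observation, that feasibility of $\T{}$ together with satisfaction of the linearised constraints in QP~\eqref{eq:qp_k_definition} makes the constraint violation, and hence the projection displacement, second order in $\alpha$, is exactly the ingredient needed.

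Two points need repair. First, your chain of estimates passes through $M(\T{}+\alpha\Delta\T{})$, but this quantity is in general undefined: for $\T{i}\notin\feasT_i$ the value $\V_i(\T{i})$ does not exist, and avoiding precisely this evaluation is the reason the projection was introduced in the first place. You must instead expand $M$ directly at the projected point, writing $\P(\T{}+\alpha\Delta\T{}) = \T{}+\alpha\Delta\T{}+d_\alpha$ with $\|d_\alpha\|=O(\alpha^2)$ and Taylor-expanding $M$ around the feasible point $\T{}$ in the perturbed direction; since $\P(\T{}+\alpha\Delta\T{})$ lies in $\prod_i\feasT_i$ by construction, every quantity in that estimate is well defined. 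Second, the $O(\alpha^2)$ violation bound requires a uniformly bounded second-order remainder for the maps $\ToutLB{i}$ and $\ToutUB{i}$, which are parametric-LP value functions and hence only piecewise smooth; at a kink the gradient used in the QP need not agree with the relevant one-sided directional derivative, and the remainder is then only $o(\alpha)$ at best. You correctly flag the non-smoothness, but the argument should be stated explicitly for the smoothed (interior-point) approximation invoked in Section~\ref{sec:sqp}. With that caveat, and with the descent property justified via $\sigma$ exceeding the QP multipliers $\|\tilde\mu\|_\infty$ rather than the current iterate's multipliers, the argument closes.
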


\begin{proof}
	The proof is provided in~\cite{Zanon2017}.
\end{proof}

\section{Results}
\label{sec:results}

In order to test the algorithm, we have considered vehicles defined by the linear system
\begin{align*}
\dot x = A_\mathrm{c} x + B_\mathrm{c} u, && A_\mathrm{c} = \matr{cc}{0 & 1 \\ 0 & 0}, && B_\mathrm{c} = \matr{c}{0  \\ 1},
\end{align*}
where $u\in [u^\mathrm{lb},u^\mathrm{ub}] \ \mathrm{m/s}^2$ is the (scalar) control and $x=(p,v)$ is the state vector, which includes position $p$ and velocity $v\geq0$. We discretise the system using zero-order hold to obtain
\begin{align*}
x_{k+1} = A x_k + B u_k, && A = \matr{cc}{1 & T_\mathrm{s} \\ 0 & 1}, && B = \matr{c}{0.5 \, T_\mathrm{s}^2 \\ T_\mathrm{s}},
\end{align*}
where $T_\mathrm{s}$ is the sampling time. Each vehicle $i$ has the cost
\begin{align}
	\label{eq:mpc_cost}
	J_i(\vars{i})=\sum_{k=0}^{N} Q_i(v_{i,k}-v_{i,k}^\mathrm{d})^2 + R_iu_{i,k}^2,
\end{align}
where $Q_i$ and $R_i$ are vehicle-specific weights and $v_{i,k}^\mathrm{d}$ is the desired speed of the vehicle at time instant $k$. 

\subsection{The MPC Formulation}

In the previous sections, we have presented the algorithm to compute the in-out times. While in principle one could formulate the MPC problem of each vehicle based on Probelm~\eqref{eq:localProblem}, we preferred to adopt a slightly modified version, where constraints~\eqref{eq:localPosIn}-\eqref{eq:localPosOut} have been relaxed as $p_i(\Tin{i},\vars{i}) \leq \pin{i}$,  $p_i(\Tout{i},\vars{i}) \geq \pout{i}$. Note that this relaxation still enforces the safety-critical collision avoidance constraint.
Moreover, in order to retain feasibility even in the presence of sensor noise, we formulated constraints~\eqref{eq:localPosIn}-\eqref{eq:localPosOut} as soft constraints with exact penalty~\cite{DeOliveira1994,Scokaert1999a}.

\subsection{Simulations}
\label{sec:simulation}

All simulation parameters are specified in Tables~\ref{tab:sim_params} and~\ref{tab:sim_params2}, where the units have been omitted for ease of reading. The intersection is defined by $\pin{i}=0 \ \mathrm{m}$, and $\pout{i}=8 \ \mathrm{m}$ and the control bounds are given by $-u^\mathrm{lb}=u^\mathrm{ub}=2 \ \mathrm{m/s}^2$ for each vehicle $i$.

\begin{table}[t]
	\caption{Simulation Parameters (SI Units)}
	\label{tab:sim_params}
	\vspace{-1em}
	\begin{center}
		\begin{tabular}{c|rrrrrr}
			Vehicle $\#$	   & 1 & 2 & 3 & 4 & 5 & 6 \\
			\toprule
			$Q_i$ 		 & 1 & 1 & 10 & 10 & 1 & 1  \\
			$R_i$ 	 & 1 & 1 & 1 & 1 & 1 & 1  \\
			$v_i^\mathrm{d}$ 	 & 80 & 80 & 65 & 70 & 70 & 60 \\
			$\hat p_0$ 			 		    	 & -55 & -60 & -55 & -70 & -70 & -60  \\
			$\hat v_0$ 		 		    	& 80 & 80 & 65 & 70 & 70 & 60 \\
		\end{tabular}
	\end{center}

\vspace{-1em}
\end{table}

\begin{table}[t]
	\caption{Scenario Crossing Order}
	\label{tab:sim_params2}
	\begin{center}
		\begin{tabular}{l|rrrrrrr}
			$\#$ scenario	   & 1 & 2 & 3 & 4 & 5 & 6 & 7 \\
			\toprule
			\multirow{6}{*}{Order}   & 1 & 1 & 2 & 1 & 1 & 2 & 2 \\
												& 2 & 2 & 1 & 2 & 2 & 1 & 1 \\
												& 6 & 4 & 3 & 5 & 3 & 3 & 3 \\
												& 3 & 3 & 4 & 4 & 4 & 4 & 6 \\
												& 4 & 5 & 5 & 3 & 6 & 6 & 4 \\
												& 5 & 6 & 6 & 6 & 5 & 5 & 5 \\
		\end{tabular}
	\end{center}

	\vspace{-1em}
\end{table}

%

We compared the two feasibility-enforcing approaches by means of numerical simulations. We have solved 5 scenarios using both the projection and the slack-based feasibility-enforcing techniques. In all simulations we used the parameters $\gamma=0.01$, $\beta=0.5$, $\epsilon=10^{-2}$. 

We stress again that evaluating functions $f$ and $h$ constitutes the major computational cost (solving QP~\eqref{eq:localProblem} and the two LPs~\eqref{eq:LP_time}), while the sensitivity information 
$\nabla   f$, $\nabla   h$, $\nabla^2_{\T{}\T{}}{\mathcal{L}}$ 
is much cheaper to compute. Therefore, every linesearch iteration has about the same computational cost as one SQP iteration. On the communication side, every linesearch iteration is a bit cheaper than an SQP iteration, as much less information needs to be communicated.

The results are displayed in Table~\ref{tab:iterations}. It can be seen that in some scenarios relaxation and projection yield comparable results in terms of SQP and linesearch iterates $n_\mathrm{SQP}$ and $n_\mathrm{ls}$ respectively. However, in some scenarios the relaxation approach performs much worse than the projection approach. 
We moreover remark that the relaxation approach never required Hessian regularisations, while the projection approach added some regularisation once in Scenarios $1$ and $4$ and twice in Scenario $7$. 

\begin{table}[t]
	\caption{SQP and backtracking linesearch iterations}
	\label{tab:iterations}
	\vspace{-1em}
	\begin{center}
		\begin{tabular}{ll|rrrrrrr}
			\multicolumn{2}{l}{$\#$ scenario}	   & 1 & 2 & 3 & 4 & 5 & 6 & 7 \\
			\toprule
			\multirow{2}{*}{Projection} 	&$n_\mathrm{SQP}$		 	   & 6 & 10 & 5 & 4 & 7 & 8 & 8 \\
														  &$n_\mathrm{ls}$					& 6 & 10 & 5 & 4 & 7 & 8 & 8 \\
			\midrule
			\multirow{2}{*}{Relaxation} 	&$n_\mathrm{SQP}$			   & 5 & 7 & 9 & 16 & 9 & 8 & 7 \\
														  &$n_\mathrm{ls}$		 		    & 7 & 10 & 15 & 49 & 8 & 16 & 13 \\
		\end{tabular}
	\end{center}
	\vspace{-1em}
\end{table}

\subsection{Experiments}
\label{sec:experiments}
In this subsection, we present the results of experiments performed at the AstaZero proving ground next to Gothenburg, Sweden. A video summarising the experimental tests is available at~\cite{Hult2016a}.
In these experiments, the coordination algorithm was validated in closed loop on a real intersection using three automated Volvo cars (two Volvo S60 sedans and one Volvo XC90 SUV) and vehicle-to-vehicle (V2V) communication from RENDITS~\cite{Rendits2016}.
Each vehicle was further equipped with Real-Time-Kinematic (RTK) capable GPS receivers, inertial sensors, and a computer network consisting of a MicroAutobox 2 real-time computer and a latop, on which the experiment software was executed.
The SQP algorithm was applied in a distributed fashion using the V2V communication, where the SQP subproblems \eqref{eq:qp_k_definition} computing the primal-dual updates were solved at a central computational node. The evaluation of the cost function, constraints and the corresponding sensitivities were done on-board the vehicles by solving the QPs and LPs using HPMPC~\cite{Frison2014}. Due to its higher reliability, the projection approach was chosen for the experimental implementation.

In each experiment the vehicles were controlled from stand-still to a configuration from which a three way collision would occur if the speeds of all vehicles were held constant. 
The coordination was set to start at a predefined time. Short before, the NLP was solved using the proposed distributed algorithm by relying on wireless communication between the vehicles and a central node.

Over the experimental campaign, we successfully completed over $100$  runs with $3$ vehicles and over $50$ runs with $2$ vehicles, and explored various parameter settings. 
However, for the sake of brevity, we present the results of only one run in this paper.
In this run, the vehicles were all controlled to be at $\hat p_0=-200 \ \mathrm{m}$, $\hat v_0=50 \ \mathrm{km/h}$ at time $t_\mathrm{c}=31.3 \ \mathrm{s}$ from the beginning of the run, which was also used as the starting time for the coordination.
Furthermore, for all vehicles, the control objective was to track the desired velocity   $v^\mathrm{d}=50 \ \mathrm{km/h}$,  using the  weights $Q_i=1\ \frac{\mathrm{s}^2}{\mathrm{m}^2}$ and  $R_i=10\ \frac{\mathrm{s}^4}{\mathrm{m}^2}$ and the horizon length $N=200$. 
The control bounds were set to $u^\mathrm{lb}=-3 \ \mathrm{m/s}^2$, $u^\mathrm{ub}=1.6 \ \mathrm{m/s}^2$.
Finally, we used the SQP parameters  $\gamma=0.01$, $\beta=0.5$, $\epsilon=10^{-2}$.

\begin{figure}[t]
	\begin{center}
		\psfrag{KKTstep}[Bc][Bc][0.8][0]{$\|r\|_\infty$, $\|\Delta \T{}\|_\infty$}
		\psfrag{tinout}[Bc][Bc][0.8][0]{$\Tin{},\Tout{}$ [s]}
		\psfrag{t}[Bc][Bc][0.8][0]{$t$ [s]}
		\includegraphics[width=0.45\textwidth]{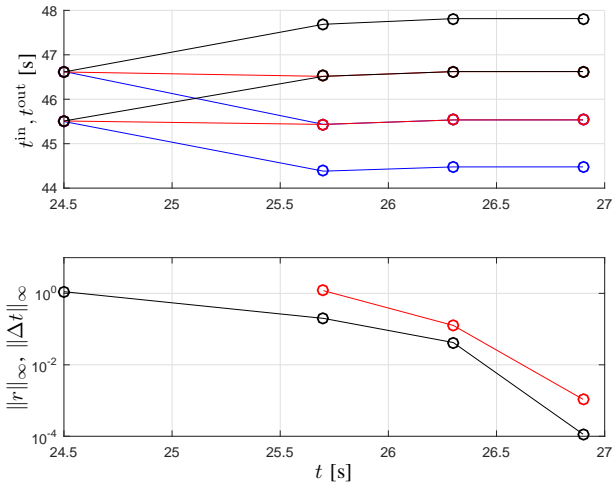}
		\caption{SQP convergence and optimal times: first car in blue, second in red and third in black. Top graph: in-out times for the three vehicles. Bottom graph: norm of the KKT residual $r$ (black) and step $\Delta \T{}$ (red).}
		\label{fig:sqp}
	\end{center}
\end{figure}
The results are displayed in Figure~\ref{fig:sqp}, where one can see the evolution in time of the KKT residual $r$, the step size $\Delta \T{}$ and the solution at the given time. In this particular run, the solver always takes full steps without any projection, which entails that feasibility is obtained after one step while the successive steps improve optimality.
All times are given with respect to the beginning of the scenario, which includes the startup phase to bring the vehicles to the assigned initial configuration from standstill. 
While the SQP algorithm is initialized at $t=24.0 \ \mathrm{s}$, the sensitivities for the first iterate become available only at $t=24.5 \ \mathrm{s}$. To compute the SQP iterates, the central node needs to wait $5$, $10$, $5$ and $5$ sampling instants respectively. These comparatively long iteration times are due to implementation-specific details and could be reduced significantly with a better implementation. In particular, we emphasize that the total computation time, i.e. solving the central SQP subproblems and the vehicle level QPs and LPs, only constitutes  about $3.4\%$ of the total computation time. Moreover, the total time per iterate varies over the iterations. This is due to the unreliability of the wireless communication channel, e.g. some packets are dropped. Therefore, variations would be present even with a more efficient implementation, though with a much smaller magnitude. 



\begin{figure}[t]
	\begin{center}
		\psfrag{p}[Bc][Bc][0.8][0]{$p$ [m]}
		\psfrag{pp}[Bc][Bc][0.8][0]{$p_{50}$ [m]}
		\psfrag{v}[Bc][Bc][0.8][0]{$v$ [km/h]}
		\psfrag{t}[Bc][Bc][0.8][0]{$t$ [s]}
		\includegraphics[width=0.45\textwidth]{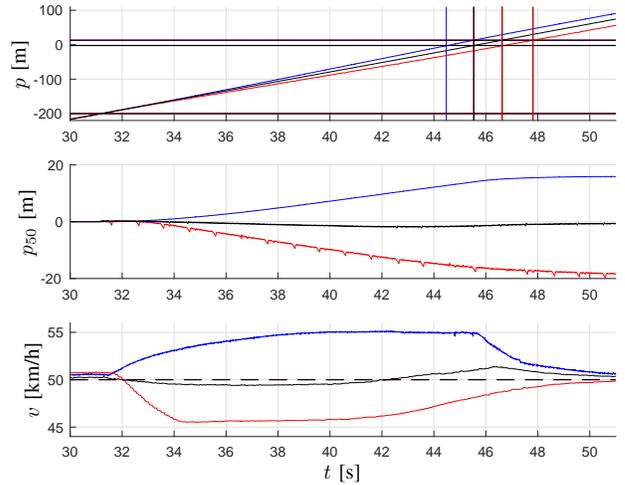}
		\caption{Position trajectories (top graph, a zoom is given in Figure~\ref{fig:traj2}): first car in blue, second in red and third in black. The intersection start and stop is marked by two black lines, the coordination starts when the three vehicles are at $\hat p_0=-200 \ \mathrm{m}$ and the vertical lines mark the in- out-times for each vehicle. Deviation of the position with respect to the constant speed scenario (middle graph). Velocity trajectory (bottom graph): reference velocity $v^\mathrm{d}$ in dashed lines and measured velocities in continuous line.}
		\label{fig:traj}
	\end{center}
\end{figure}

The closed-loop trajectories are displayed in Figure~\ref{fig:traj}. As one can expect, at the solution $\Tout{1}=\Tin{2}$ and $\Tout{2}=\Tin{3}$.
Instead of proceeding at constant velocity, the velocities are adjusted and the vehicles deviate from their reference in order to meet the crossing times. Note that  the second car does not keep a constant velocity but accelerates as it approaches the intersection. This reduces the duration of its permanence inside the intersection, thus allowing the other cars to deviate less from their references and, therefore, to reduce the overall cost. 

Without coordination, all vehicles would proceed from the coordination starting point at a constant velocity $v=50 \ \mathrm{km/h}$.  The difference in position between the coordinated and uncoordinated solutions, here denoted $p_{50}$, is also displayed Figure~\ref{fig:traj}. 
Because we do not provide any reference for the position, after crossing the intersection, the first car is ahead of the position it would have in the absence of the other two cars, whereas the last vehicle lags behind.

The positions shown in Figure~\ref{fig:traj} and Figure~\ref{fig:traj2} were computed based on readings from GPS receivers, and the quality of the signal differs between three cars. In particular, the GPS unit for the third car (red line in the plots) systematically provided unreliable measurements with a frequency of $1 \ \mathrm{Hz}$.
Due to such measurement errors and the mismatch between the prediction model and the physical vehicles, the constraints enforcing the in and out times are always violated by the closed loop system.
We define the time violations as $\delta \Tin{i}$, $\delta \Tout{i}$, where a positive sign means that the actual time was larger than planned, such that $\delta \Tin{i}\geq 0$ corresponds to a safe configuration (the vehicle entered late), while $\delta \Tout{i}\geq0$ corresponds to a dangerous configuration (the vehicle left late). For the position we kept the convention of the constraint definition, such that $p_i(\Tin{i},\vars{i})-\pin{i}\leq0$ is safe, while $p_i(\Tout{i},\vars{i})-\pout{i}\leq 0$ is dangerous. 
The estimated violation of the in-out times for the  experiment run is given in Table~\ref{tab:time_pos_error}. 
It should be noted that the values given in Table~\ref{tab:time_pos_error} are taken from the position signal displayed in Figure~\ref{fig:traj2}. For vehicle 2 and 3 this signal is rather noisy and not very reliable, hence the large violations. We remark that the observed constraint violations can be easily accounted for in the proposed setup by constraint tightening, i.e. by enlarging the intersection definition. Because the constraint has the dimension of a distance, the tightening procedure can be directly related to positioning errors.


The analysis of the obtained experimental results suggests that the proposed decomposition of the control scheme has the following desirable properties: (a) it does not need to rely on efficient and reliable communication; (b) even if not implemented in the most efficient way, the algorithm converges fast enough to allow for a real-time implementation; (c) after the solution has been computed, communication is not required anymore and the vehicles can be safely controlled in a decoupled fashion; (d) sporadic wrong sensor readings are natively handled by the algorithm.

After a thorough experimental campaign in a safe configuration, i.e. driving the vehicles on parallel lanes, we ran some experiments on a real crossing scenario. For safety reasons, we added $5 \ \mathrm{m}$ to the intersection length. An aerial picture is displayed in Figure~\ref{fig:crossing}, where we highlighted the intersection by a red box. In the real crossing experiment, we obtained results comparable to those obtained in the parallel lane configuration.

\begin{figure}[t]
	\begin{center}
		\psfrag{p}[Bc][Bc][0.8][0]{$p$ [m]}
		\psfrag{pp}[Bc][Bc][0.8][0]{$p_{50}$ [m]}
		\psfrag{v}[Bc][Bc][0.8][0]{$v$ [km/h]}
		\psfrag{t}[Bc][Bc][0.8][0]{$t$ [s]}
		\includegraphics[width=0.45\textwidth]{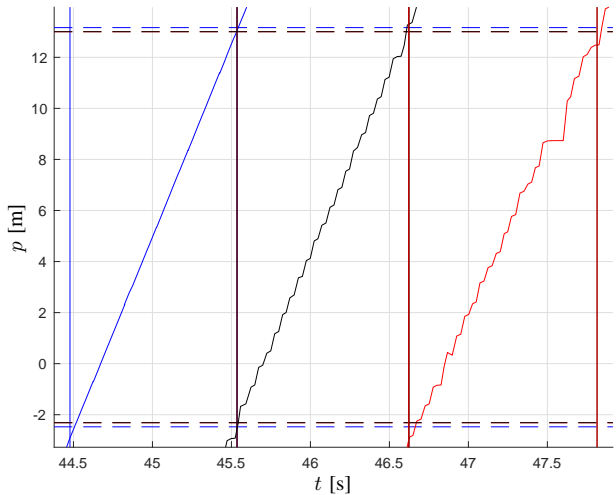}
		\caption{Zoom of the position trajectories from Figure~\ref{fig:traj} in the intersection area. The horizontal dashed lines mark the beginning and the end of the intersection for each vehicle; the red line is superposed to the black one, as the two corresponding cars are identical. The vertical lines mark the in and out times sent to each car by the central node.}
		\label{fig:traj2}
	\end{center}
\end{figure}

\section{Conclusions and Future Research}
\label{sec:conclusions}

In this paper, we have proposed a distributed control scheme for optimal vehicle coordination at intersections. We have developed an ad-hoc optimisation algorithm based on primal decomposition and a feasibility-enforcing projection. We have compared this approach versus a the relaxation feasibility-enforcing approach in simulations which have indicated a lower reliability of the latter.

We have tested our control approach in experiments using real cars on a test track. Though our implementation of the algorithm on the real-time platform was simple and far from optimal, the experimental results have demonstrated the applicability and real-time feasibility of our approach. Moreover, the sporadic faulty sensor readings and the unreliable communication have both demonstrated some degree of robustness of our control approach.

Future work will consider improved approaches that (a) update the in and out times in a closed-loop fashion, (b) solve the ordering problem, i.e. the full mixed-integer problem.

\begin{figure}[t]
	\begin{center}
\begin{tikzpicture}
	\node[anchor=south west,inner sep=0] at (0,0) {\includegraphics[width=\linewidth]{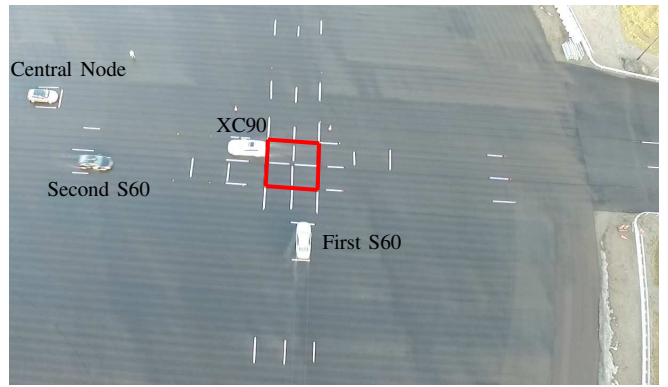}};
	\node at (0.8,4.2){\footnotesize Central Node};
	\node at (3.1,3.45){\footnotesize XC90};
	\node at (1.2,2.6){\footnotesize Second S60};
	\node at (4.7,1.9){\footnotesize First S60};
	\draw[line width=1.5pt,-,red] (3.42,2.64) -- (4.11,2.59);
	\draw[line width=1.5pt,-,red] (4.11,2.59) -- (4.13,3.21);
	\draw[line width=1.5pt,-,red] (4.13,3.21) -- (3.45,3.26);
	\draw[line width=1.5pt,-,red] (3.45,3.26) -- (3.42,2.64);
	
%
\end{tikzpicture}
		\caption{Aerial picture of the crossing experiment.}
		\label{fig:crossing}
	\end{center}
\end{figure}

\bibliographystyle{plain}
\bibliography{../../bibliography}

\begin{table}[t]
	\caption{In-out time and position errors (SI units)}
	\label{tab:time_pos_error}
	\begin{tabular}{r|cccc}
		Car \# & $\delta \Tin{i}$ & $\delta \Tout{i}$ & $p_i(\Tin{i},\vars{i})-\pin{i}$ & $p_i(\Tout{i},\vars{i})-\pout{i}$ \\
		\toprule
		1  & $0.0287$   & $\phantom{-} 0.0078$  & $-0.4363$  &                    $-0.1213$ \\
		2 & $0.0077$    &                   $-0.0156$  & $-0.2461$  &  $\phantom{-} 0.2914$ \\
		3 & $0.0480$    & $\phantom{-}0.0257$  & $-0.6610$   &                    $-0.5217$ \\
	\end{tabular}
\end{table}

\end{document}